\theoremstyle{definition}
\newtheorem*{definition}{Definition}
\theoremstyle{plain}
\newtheorem{remark}{Remark}[section]
\newtheorem{corollary}[remark]{Corollary}
\newtheorem{lemma}[remark]{Lemma}
\newtheorem{theorem}[remark]{Theorem}
\DeclareMathOperator{\cf}{cf}
\DeclareMathOperator{\Th}{Th}
\DeclareMathOperator{\sa}{SA}
\DeclareMathOperator{\sk}{SK}
\newcommand{\qcf}{Q^{\cf}}
\newcommand{\ebg}{\exists^{\,\cf}} 
\newcommand{\fgg}{\forall^{\,\cf}} 
\newcommand{\anti}{^\mathrm{anti}}
\renewcommand{\phi}{\varphi}
\begin{document}

\title{An exposition of the compactness of $L(\qcf)$ }

\date{September 3, 2020}

\author{Enrique Casanovas and Martin Ziegler\thanks{ Both authors were
    partially funded by a Spanish government grant MTM2017-86777-P.
    The first author also by a Catalan DURSI grant 2017SGR-270.} }

\maketitle

\begin{abstract}
  We give an exposition of the compactness of $L(\qcf_C)$, for any set $C$
  of regular cardinals.
\end{abstract}

\section{Introduction}

We present here a new and short exposition of the proof of the
compactness of the logic $L(\qcf_C)$, first-order logic extended by
the cofinality quantifier $\qcf_C$, where $C$ is a class of regular
cardinals. The logic and the proof of compactness are due to
S.~Shelah. The Compactness Theorem was stated and proved
in~\cite{Sh43}, but this article is not self-contained and some
fundamental steps of the proof must be found in the earlier
article~\cite{Sh:18}. The interested reader consulting these two
articles will soon realise that the structure of the proof is not
completely transparent and that to fully understand the details
requires a lot of work.

The most popular case of the cofinality quantifier is the logic
$L(\qcf_\omega)$ of the quantifier of cofinality $\omega$, that is,
$C=\{\omega\}$. Our motivation comes from the application of
$L(\qcf_\omega)$ in~\cite{CaSh:1116} to an old problem on
expandability of models. An anonymous referee of a preliminary version
of~\cite{CaSh:1116} did not accept the validity (in ZFC) of the
compactness proof presented in~\cite{Sh43}, apparently confused by the
assumption of the existence of a weakly compact cardinal made at the
beginning of the article. The assumption only applies to a previous
result on a logic stronger than first-order logic even for countable
models.

Our proof of compactness of $L(\qcf_C)$ uses some ideas
of~\cite{Sh43}, but it is more in the spirit of Keisler's proof
in~\cite{Kei70} of countable compactness of the logic $L(Q_1)$ with
the quantifier of uncountable cardinality. However we use a simpler
notion of weak model. J.~V\"a\"an\"anen in the last chapter
of~\cite{Van11} offers also a proof of compactness of $L(\qcf_\omega)$
in Keisler's style, but it is incomplete and only gives countable
compactness (see I.~Hodkinson's review in~\cite{Hodkinson12}).

There are some other proofs in the literature, but also
unsatisfactory. The proof by H-D.~Ebbinghaus in~\cite{Ebb85}, based on
a set-theoretical translation, is just an sketch and the proof of
J.A.~Makowsky and S.~Shelah in \cite{MakowskiShelah81} only replaces
part of Shelah's argument in~\cite{Sh43} by a different reasoning and
does not include all details.

\section{Connections}
For a linear ordering $(X,<)$ we use the expressions
\[\ebg x\; A(x),\;\;\text{and}\;\;\fgg x\;A(x)\]
for $\forall x'\,\exists x\; ( x' \leq x\land A(x))$, and $\exists
x'\,\forall x\; ( x' \leq x\to A(x))$, respectively.  The variables $x,x^\prime$ will range over the set $X$, $y,y^\prime$ over $Y$ and $z,z^\prime$ over $Z$.

\begin{definition}
  Let $X$ and $Y$ be two linear orderings. A connection between $X$
  and $Y$ is a relation $G\subset X\times Y$ with satisfies
\begin{align}
  \ebg x\, \fgg y\; G(x,y)&\;\;\text{and}\\
  \ebg y\, \fgg x\; \neg G(x,y).
\end{align}
\end{definition}
Note that $X$ and $Y$ cannot be connected if $X$ or $Y$ has a last
element.
\begin{remark}
  \begin{enumerate}
  \item If $X$ has no last element, the relation $x\leq y$ connects
    $X$ with itself.
  \item If $G$ connects $X$ and $Y$, then $\neg G^{-1}=\{(y,x)\mid
    \neg G(x,y)\}$ connects $Y$ and $X$.
  \item If $G$ connects $X$ and $Y$, and $H$ connects $Y$ and $Z$,
    then
    \[K=\Bigl\{(x,z)\,\Bigm|\,\exists y'\;\bigl(
    \forall y\; ( y'\leq y\to G(x,y))\;\land\;H(y',z)\bigr)\Bigr\}\]
    connects $X$ and $Z$.
  \end{enumerate}
\end{remark}
\begin{proof}
  1. and 2. are easy to see. We will not use 3. and leave the proof to
  the reader.
\end{proof}
\begin{remark}\label{R:anti}
  If $X$ and $Y$ are connected by $G$,
  then they are  also connected by
  \[G'=\Bigl\{(x,y)\Bigm| \exists x'\;\bigl(x\leq x'\land
  \forall y'\; (y\leq y'\to G(x',y'))\bigr)\Bigr\}.\]
  $G'$ is antitone in $x$ and monotone in $y$.
\end{remark}
\begin{proof}
  It is easy to see that
  $G\anti=\bigl\{(x,y)\bigm| \exists x'\;(x\leq x'\land
  G(x',y))\bigr\}$ connects $X$ and $Y$ and is antitone in $x$. Now
  it can be seen that
  \[G'=(\neg((\neg G^{-1})\anti)^{-1})\anti.\]
\end{proof}
\begin{lemma}\label{L:gleiche_cf}
  Two linear orders without last element are connected if and only if
  they have the same cofinality.
\end{lemma}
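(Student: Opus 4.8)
The plan is to prove the two implications separately; note first that, by the remark following the Definition, a connected linear order has no last element, so ``cofinality'' is meaningful on both sides, and write $\kappa=\cf(X)$, $\lambda=\cf(Y)$. For the implication ``$\kappa=\lambda\Rightarrow$ connected'', I would fix strictly increasing cofinal sequences $(a_i)_{i<\kappa}$ in $X$ and $(b_i)_{i<\kappa}$ in $Y$ (which exist since neither order has a last element), exhibit the relation
\[
G=\bigl\{(x,y)\in X\times Y\;\bigm|\;\exists i<\kappa\;(x\leq a_i\land b_i\leq y)\bigr\},
\]
and check (1) and (2) directly. Condition (1) is the easy half: given $x'$, choose $i$ with $x'\leq a_i$; then $G(x',y)$ holds for every $y\geq b_i$, which is exactly $\fgg y\,G(x',y)$. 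For (2) the one point that needs care is that, since $(b_i)$ is increasing and cofinal and $Y$ has no last element, for each $y\in Y$ the set $\{i<\kappa\mid b_i\leq y\}$ is a \emph{proper} initial segment of $\kappa$, hence equals $[0,i_0)$ for some $i_0<\kappa$; then $\neg G(x,y)$ holds for every $x\geq a_{i_0}$, so $\fgg x\,\neg G(x,y)$, and thus (2). (Remark~\ref{R:anti} could be used to tidy $G$ up but is not needed here.)

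For the converse, I would first reduce to proving $\lambda\leq\kappa$ for an arbitrary $G$ connecting $X$ and $Y$: the reverse inequality $\kappa\leq\lambda$ then follows by applying this to $\neg G^{-1}$, which connects $Y$ and $X$. So let $G$ connect $X$ and $Y$; using Remark~\ref{R:anti} replace $G$ by $G'$ and assume $G$ is antitone in $x$ and monotone in $y$. Put $Y_x=\{y\in Y\mid G(x,y)\}$; monotonicity in $y$ makes each $Y_x$ a final segment of $Y$, and antitonicity in $x$ makes $x\leq x'$ imply $Y_{x'}\subseteq Y_x$. Since $G$ is monotone in $y$, ``$\fgg y\,G(x,y)$'' is equivalent to ``$Y_x\neq\emptyset$'', so (1) says that the initial segment $\{x\mid Y_x\neq\emptyset\}$ of $X$ is cofinal, hence is all of $X$: every $Y_x$ is nonempty. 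Dually, ``$\fgg x\,\neg G(x,y)$'' is equivalent to ``$y\notin\bigcap_x Y_x$'', so (2) says that the initial segment $Y\setminus\bigcap_x Y_x$ (an intersection of final segments is a final segment) is cofinal, hence is all of $Y$: $\bigcap_{x\in X}Y_x=\emptyset$. Finally, take a cofinal $S\subseteq X$ with $|S|=\kappa$ and choose $b_s\in Y_s$ for each $s\in S$. If some $y\in Y$ bounded $\{b_s\mid s\in S\}$ from above, then $y\in Y_s$ for every $s$ (as $Y_s$ is a final segment with $b_s\leq y$), and since $S$ is cofinal this forces $y\in\bigcap_{x}Y_x=\emptyset$ --- impossible. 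Hence $\{b_s\mid s\in S\}$ is cofinal in $Y$ and $\lambda\leq|S|=\kappa$.

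I expect the converse to carry the real weight. The tempting approach --- walk up a cofinal sequence in $X$ and peel off a strictly increasing cofinal sequence of $Y$ --- fails, because consecutive $Y_{a_\alpha}$ need not be distinct and so there is nothing to peel off. The device around this is to give up on controlling the chosen elements of the $Y_s$ individually and instead read the cofinality of $Y$ off the single global fact $\bigcap_x Y_x=\emptyset$. Pulling that fact, and its dual ``every $Y_x\neq\emptyset$'', out of the two rather opaque conditions (1) and (2) is precisely what forces the preliminary normalisation of $G$ via Remark~\ref{R:anti} and the rewriting of the $\fgg$-quantifiers using monotonicity; that bookkeeping, more than any single step, is the main obstacle.
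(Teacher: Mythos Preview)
Your proof is correct; the forward direction is identical to the paper's, with the same $G$. For the converse you take a genuinely different route. You first normalise $G$ via Remark~\ref{R:anti}, then recast conditions (1) and (2) as ``every $Y_x$ is nonempty'' and ``$\bigcap_x Y_x=\emptyset$'', and finally pick one point from each $Y_s$ along a cofinal $S\subseteq X$ of size $\kappa$. The paper is more direct and never invokes Remark~\ref{R:anti}: it simply chooses the cofinal sequence $(x_\alpha)_{\alpha<\kappa}$ from among those $x$ witnessing $\fgg y\,G(x,y)$ (cofinally many by (1)), picks $y_\alpha$ with $y_\alpha\leq y\to G(x_\alpha,y)$, and uses (2) to see the $y_\alpha$ are cofinal --- cofinally many $y$ satisfy $\neg G(x,y)$ for all large $x$, hence $\neg G(x_\alpha,y)$ for some $\alpha$, whence $y<y_\alpha$. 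So your worry that ``walking up a cofinal sequence in $X$'' must fail is misplaced: one does not need a \emph{strictly increasing} sequence in $Y$, only a cofinal family, and that the paper gets in two lines. Your argument buys a clean structural picture (a nested family of nonempty final segments with empty intersection) and makes the symmetry reduction explicit; the paper's buys brevity and avoids the dependence on Remark~\ref{R:anti}.
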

\begin{proof}
  If $\cf(X)=\cf(Y)=\kappa$, choose two increasing cofinal sequences
  $(x_\alpha\mid\alpha<\kappa)$ and $(y_\alpha\mid\alpha<\kappa)$ in
  $X$ and $Y$. Then \[G=\{(x,y)\mid\exists\alpha\;(x\leq x_\alpha\land
  y_\alpha\leq y)\}\] connects $X$ and $Y$.\footnote{It suffices to
    assume that the $y_\alpha$ are increasing. Also one can use
    $G=\{(x_\alpha,y)\mid y_\alpha\leq y)\}$.}

  For the converse assume that $\cf(X)=\kappa$, and that $G$ connects
  $X$ and $Y$. Choose a cofinal sequence $(x_\alpha\mid\alpha<\kappa)$
  in $X$ and elements $y_\alpha$ in $Y$ such that $y_\alpha\leq y\to
  G(x_\alpha,y)$ for all $y$. Then the $y_\alpha$ are cofinal in $Y$.
  To see this we use that there are cofinally many $y$ such that $\neg
  G(x,y)$ for sufficiently large $x$, which implies that $\neg
  G(x_\alpha,y)$ for some $\alpha$. This implies $y<y_\alpha$.
\end{proof}
\begin{lemma}\label{L:unschoen}
  Assume that $G\subset X\times Y$  satisfies
  \begin{align}
    &\ebg x\,\exists y\; G(x,y)\;\;\text{and}\label{L:unschoen:xy}\\
    &\forall y'\,\exists x'\,\forall xy\;\bigl((x'\leq x\land y\leq y')\to\neg
    G(x,y)\bigr).\label{L:unschoen:yx}
  \end{align}
  Then $G'=\{(x,y)\mid\exists y'\;(y'\leq y\land G(x,y'))\}$ connects
  $X$ and $Y$.
\end{lemma}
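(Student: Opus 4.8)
The plan is to verify directly that $G'$ satisfies the two defining conditions of a connection, i.e.\ $\ebg x\,\fgg y\;G'(x,y)$ and $\ebg y\,\fgg x\;\neg G'(x,y)$; everything will come from unwinding the abbreviations $\ebg,\fgg$ and the definition of $G'$. The point of passing from $G$ to $G'$ is that $G'$ is monotone in $y$ (if $G'(x,y)$ and $y\le y''$ then $G'(x,y'')$, via the same witness), so a single pair $(x,y_0)$ with $G(x,y_0)$ already yields $G'(x,y)$ for all $y\ge y_0$; dually, the clause ``$\exists y'\le y$'' means $\neg G'(x,y)$ amounts to $\forall y'\le y\,\neg G(x,y')$, which is exactly the sort of statement hypothesis \eqref{L:unschoen:yx} is built to guarantee on a tail of $x$'s.

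For the first condition I would use \eqref{L:unschoen:xy}. Given any $x_0\in X$, that hypothesis produces some $x\ge x_0$ and some $y_0$ with $G(x,y_0)$; then $\fgg y\;G'(x,y)$ holds for this $x$, witnessed by $y_0$. Since $x_0$ was arbitrary, this gives $\ebg x\,\fgg y\;G'(x,y)$.

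For the second condition I would prove the slightly stronger statement that $\fgg x\,\neg G'(x,y)$ holds for \emph{every} $y\in Y$, which a fortiori gives $\ebg y\,\fgg x\,\neg G'(x,y)$. Fix $y$ and apply \eqref{L:unschoen:yx} with $y'=y$ to obtain an $x'$ such that $\neg G(x,y'')$ whenever $x\ge x'$ and $y''\le y$. For any such $x\ge x'$, having $G'(x,y)$ would mean $G(x,y'')$ for some $y''\le y$, which is impossible; hence $\neg G'(x,y)$ for all $x\ge x'$, i.e.\ $\fgg x\,\neg G'(x,y)$.

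I do not expect a genuine obstacle: the lemma is a quantifier-bookkeeping exercise, the only mildly delicate point being the order of quantifiers in \eqref{L:unschoen:yx}, where a single $\forall xy$ governs the conjunctive antecedent $x'\le x\land y\le y'$. One could also add the remark that \eqref{L:unschoen:xy} and \eqref{L:unschoen:yx} together already force $X$ and $Y$ to have no last element (instantiating \eqref{L:unschoen:yx} at a maximal $y'$ would contradict \eqref{L:unschoen:xy}), in accordance with the observation after the Definition, but this is not needed for the verification above.
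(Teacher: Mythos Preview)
Your verification is correct and is exactly the straightforward check the paper has in mind; the paper's own proof is the single sentence ``This is a straightforward verification,'' so you have simply written out what was left to the reader. Your side remark that \eqref{L:unschoen:xy} and \eqref{L:unschoen:yx} force $X$ and $Y$ to have no last element is also correct and harmless.
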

\noindent Note that a connecting $G$ which is monotone in $y$ satisfies
\eqref{L:unschoen:xy} and \eqref{L:unschoen:yx}.
\begin{proof}
  This is a straightforward verification.
\end{proof}

\section{The Main Lemma}

Consider a $L$-structure $M$ with two (parametrically) definable
linear orderings, $<_\phi$ and $<_\psi$ of its universe, both without
last element. We say that $\phi$ and $\psi$ are \emph{definably
  connected} if there is a definable connection between $(M,<_\phi)$
and $(M,<_\psi)$.

Recall that a formula $\varphi(x)$ \emph{isolates} a partial type
$\Sigma(x)$ in a theory $T$ if it is consistent with $T$ and implies
$\Sigma(x)$ in $T$ (see Definition 4.1.1 in~\cite{TentZiegler10} or
the definition of locally realizing a type in~\cite{Cha-Kei90}). $T$
isolates $\Sigma(x)$ if some formula $\varphi(x)$ does it in $T$.

\begin{lemma}\label{L:Main}
  If $\phi$ and $\psi$ are not definably connected, and $c$ is a new
  constant, the theory
  \[T'=\Th(M,m)_{m\in M}\cup\{m<_\phi c\mid m\in M\}\]
  does not isolate the partial type $\Sigma(y)= \{n<_\psi y\mid n\in
  M\}$.
\end{lemma}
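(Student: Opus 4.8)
The plan is to prove the contrapositive: assuming that $T'$ \emph{does} isolate $\Sigma(y)$, I will exhibit a definable connection between $(M,<_\phi)$ and $(M,<_\psi)$. Let some $L'$-formula isolate $\Sigma(y)$ over $T'$, where $L'$ is the language of $T'$, i.e.\ $L$ together with a name for each element of $M$ and the new constant $c$. Since $c$ is a constant, this formula can be written $\theta(y,c)$ for some $L$-formula $\theta(y,x)$ with parameters from $M$; in the end the variable $x$ will play the role of a position in $<_\phi$ and $y$ a position in $<_\psi$. My candidate connection is
\[G=\{(x,y)\mid M\models\theta(y,x)\}\subseteq (M,<_\phi)\times(M,<_\psi),\]
and I will check that it satisfies the two hypotheses \eqref{L:unschoen:xy} and \eqref{L:unschoen:yx} of Lemma~\ref{L:unschoen}. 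Granting that, the relation $G'$ that Lemma~\ref{L:unschoen} produces from $G$ is again defined by a formula over $M$ (the constant $c$ never enters it), so $G'$ is a definable connection between $(M,<_\phi)$ and $(M,<_\psi)$, which is what the contrapositive requires.

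The two hypotheses come from the two clauses in the definition of ``isolates''. From ``$\theta(y,c)$ is consistent with $T'$'': for every $m\in M$ the set $\Th(M,m)_{m\in M}\cup\{m<_\phi c,\ \theta(y,c)\}$ is consistent, being a subset of $T'\cup\{\theta(y,c)\}$; and since $\Th(M,m)_{m\in M}$ is complete and $c$ does not occur in it, this forces $M\models\exists x\exists y\,(m<_\phi x\wedge\theta(y,x))$. Letting $m$ range over $M$ gives exactly $M\models\ebg x\,\exists y\,G(x,y)$, which is \eqref{L:unschoen:xy}. From ``$\theta(y,c)$ implies $\Sigma(y)$ in $T'$'': for each $n\in M$ the set $T'\cup\{\theta(y,c),\ \neg(n<_\psi y)\}$ is inconsistent, so by compactness only finitely many of the axioms $m<_\phi c$ are needed; replacing those finitely many $m$'s by a single $<_\phi$-upper bound $a_n\in M$ (they form a finite subset of the linear order $<_\phi$) and then deleting the now-superfluous fresh constant $c$, one obtains
\[M\models\forall x\forall y\,\bigl((a_n<_\phi x\wedge\neg(n<_\psi y))\to\neg\theta(y,x)\bigr).\]
Since $<_\phi$ has no last element, for a given $y'$ one may pick $x'$ with $a_{y'}<_\phi x'$; then $x'\le_\phi x$ and $y\le_\psi y'$ give $a_{y'}<_\phi x$ and $\neg(y'<_\psi y)$, hence $\neg G(x,y)$. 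This is precisely \eqref{L:unschoen:yx}.

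With \eqref{L:unschoen:xy} and \eqref{L:unschoen:yx} verified, Lemma~\ref{L:unschoen} yields that $G'=\{(x,y)\mid\exists y'\,(y'\le_\psi y\wedge\theta(y',x))\}$ connects $(M,<_\phi)$ and $(M,<_\psi)$; as $\theta(y',x)$ is a formula over $M$, $G'$ is a definable connection, so $\phi$ and $\psi$ are definably connected — the contrapositive of the assertion.

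The only place that calls for some care is the bookkeeping in the reduction from the syntactic notion ``isolates'' to the two statements about truth in $M$: passing from a finite fragment of $\{m<_\phi c\mid m\in M\}$ to one bound $a_n$ by linearity of $<_\phi$, and legitimately removing the constant $c$, which occurs in $\theta(y,c)$ but in none of the axioms of $\Th(M,m)_{m\in M}$. Beyond that the argument is just matching the situation to the hypotheses of Lemma~\ref{L:unschoen}, which is tailored to exactly this step.
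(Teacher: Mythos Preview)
Your proof is correct and follows essentially the same route as the paper's: assume an isolating formula $\theta(y,c)$, translate the two clauses of ``isolates'' into conditions \eqref{L:unschoen:xy} and \eqref{L:unschoen:yx} of Lemma~\ref{L:unschoen} for the relation $G(x,y)\Leftrightarrow\theta(y,x)$ on $M$, and conclude that $\phi$ and $\psi$ are definably connected. The only difference is cosmetic---you frame it as a contrapositive and are a bit more explicit about the compactness/upper-bound bookkeeping and about the definability of the resulting $G'$, all of which the paper leaves implicit.
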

\begin{proof}
  Assume that $\gamma(c,y)$, for some $L(M)$-formula $\gamma(x,y)$,
  isolates $\Sigma(y)$ in $T'$. This means that
  \begin{enumerate}
  \item $T'\cup\{\gamma(c,y)\}$ is consistent.
  \item $T'\vdash \gamma(c,y)\to n<_\psi y$ for all $n\in M$.
  \end{enumerate}
  We show that the relation $G$ defined by $\gamma(x,y)$ has
  properties \eqref{L:unschoen:xy} and \eqref{L:unschoen:yx} of Lemma
  \ref{L:unschoen}, where $X=(M,<_\phi)$ and $Y=(M,<_\psi)$. This will
  contradict the hypothesis of our Lemma.

  That $T'\cup\{\gamma(c,y)\}$ is consistent means that for all $m\in
  M$ the theory $\Th(M,m)_{m\in M}$ does not prove $m\leq_\phi
  c\to\neg\exists y\;\gamma(c,y)$, which means that $M\models\exists
  x(m\leq_\psi x\land\exists y\;\gamma(x,y))$. This is exactly
  condition \eqref{L:unschoen:xy} of \ref{L:unschoen}.

  That $T'\vdash \gamma(c,y)\to n<_\psi y$ means that there is an
  $m\in M$ such that $\Th(M,m)_{m\in M}$ proves $(m\leq_\phi c\land
  \gamma(c,y))\to n<_\psi y$, which means $M\models\forall xy\;
  \bigl(( m\leq_\phi x\land y\leq_\psi n)\to\neg\gamma(x,y)\bigr)$.
  The existence of such $m$ for all $n$ is exactly condition
  \eqref{L:unschoen:yx} of \ref{L:unschoen}.
\end{proof}

\begin{corollary}\label{C:Omitting2} Assume $\kappa$ is regular,
$|M|,|L|\leq \kappa$, and $<_\varphi$ is a definable linear ordering
  of $M$ without last element. Then there is an elementary extension
  $N$ of $M$ such that:
  \begin{enumerate}
  \item $M$ is not $<_\varphi$-cofinal in $N$.
  \item If $<_\psi$ is a definable linear ordering of $M$ of
    cofinality $\kappa$, and $\psi$ and $\varphi$ are not definably
    connected, then $M$ is $<_\psi$-cofinal in $N$.
  \end{enumerate}
\end{corollary}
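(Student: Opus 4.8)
The plan is to build $N$ as a union of an elementary chain $(M_i \mid i < \kappa)$ with $M_0 = M$, using a bookkeeping argument to ensure both requirements hold in the limit. For (1), I want to adjoin at each stage (or at least at stage $0$) a new element above all of $M$ in the $<_\varphi$-ordering; concretely, by Lemma~\ref{L:Main} the theory $T' = \Th(M,m)_{m \in M} \cup \{m <_\varphi c \mid m \in M\}$ does not isolate $\Sigma(y) = \{n <_\psi y \mid n \in M\}$ for any $\psi$ that is not definably connected to $\varphi$. Since $|M|, |L| \le \kappa$, there are at most $\kappa$ many such partial types $\Sigma_\psi$ to omit, and by the Omitting Types Theorem (in the form allowing us to omit $\kappa$-many non-isolated types over a language of size $\le \kappa$) there is a model $N^* \models T'$ omitting all of them simultaneously. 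The interpretation of $c$ in $N^*$ witnesses that $M$ is not $<_\varphi$-cofinal in $N^*$, giving (1), and omitting $\Sigma_\psi$ says exactly that no element of $N^*$ lies $<_\psi$-above all of $M$, i.e.\ $M$ is $<_\psi$-cofinal in $N^*$, giving (2).

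The main obstacle I anticipate is that the Omitting Types Theorem in its classical form omits only countably many types, or a single type, whereas here I may need to omit up to $\kappa$ many types at once while simultaneously \emph{realizing} the type $\{m <_\varphi c\}$ of "upper bounds for $M$ in $<_\varphi$". The standard fix is to iterate: construct an elementary chain $(M_i \mid i < \kappa)$ where at successor steps one adds an upper bound for $\varphi$ (using that $<_\varphi$ has no last element, together with Lemma~\ref{L:Main} to control the relevant type), at limit steps one takes unions, and one uses a pairing function on $\kappa$ to schedule, for each potential witness $a \in M_i$ and each $\psi$, a later stage that kills the possibility of $a$ being a $<_\psi$-upper bound of $M$. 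Regularity of $\kappa$ is exactly what guarantees that a cofinal $\kappa$-sequence inside $<_\psi$ (which exists because $\cf(<_\psi^M) = \kappa$) is not bounded in the union $N = \bigcup_{i<\kappa} M_i$, so $M$ stays $<_\psi$-cofinal in $N$; and it guarantees that the $<_\varphi$-upper bounds we keep adding are not eventually absorbed.

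In carrying this out I would first fix enumerations: list all pairs $(\psi, \tau)$ where $\psi$ ranges over the (at most $\kappa$) formulas defining a linear order of $M$ of cofinality $\kappa$ not definably connected to $\varphi$, and $\tau$ ranges over a set of size $\le \kappa$ of "candidate names" that could denote elements of the eventual $N$; arrange these into a single $\kappa$-sequence of tasks. Then at stage $i$, given $M_i$ with $|M_i| \le \kappa$, handle the $i$-th task: to keep $M$ $<_\psi$-cofinal, observe that the relevant type $\Sigma_\psi(y)$ over $\Th(M_i, a)_{a \in M_i} \cup \{m <_\varphi d\}$ is still non-isolated — this is where I apply Lemma~\ref{L:Main}, noting that the hypothesis "$\psi$ and $\varphi$ not definably connected" is a property of $M$ that I must carry along, so I should work with the original parameters from $M$ rather than from $M_i$ — and use a one-step omitting-types construction to pass to $M_{i+1}$ omitting that single type (for that single candidate element) while still having $M_i$ elementary in $M_{i+1}$. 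At limits take unions. Finally verify: $M$ is not $<_\varphi$-cofinal in $N$ because some $d$ was added above all of $M$ and stays above (regularity); and for each relevant $\psi$, any $b \in N$ lies in some $M_i$, and the task that processed $b$ with respect to $\psi$ forced an element of $M$ above $b$ in $<_\psi$. I would be slightly careful that the bookkeeping sees every element of $N$ — since $|N| \le \kappa$ and $\kappa$ is regular this is routine — and that "definably connected" is interpreted throughout relative to $M$, as in Lemma~\ref{L:Main}.
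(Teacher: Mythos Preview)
Your first paragraph is essentially the paper's proof: add a constant $c$ above $M$ in $<_\varphi$, invoke Lemma~\ref{L:Main} for non-isolation of each $\Sigma_\psi$ in $T'$, and apply omitting types. The gap is in the last step. The $\kappa$-Omitting Types Theorem (Chang--Keisler, Theorem~2.2.19) does not apply to types that are merely non-isolated by a single formula; when the language has size $\kappa>\omega$ one needs that no set of fewer than $\kappa$ formulas isolates $\Sigma_\psi$. Lemma~\ref{L:Main} supplies only the single-formula version. The paper closes this gap using precisely the hypothesis $\cf(<_\psi)=\kappa$, which you never invoke in this paragraph: $\Sigma_\psi=\{n<_\psi y\mid n\in M\}$ is a $<_\psi$-chain of formulas of cofinality $\kappa$, so if some $\Gamma$ with $|\Gamma|<\kappa$ were consistent with $T'$ and implied all of $\Sigma_\psi$, then by compactness and pigeonhole over the fewer-than-$\kappa$ finite conjunctions from $\Gamma$ a single formula would already imply a cofinal subfamily of $\Sigma_\psi$, hence all of it, contradicting Lemma~\ref{L:Main}. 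Regularity of $\kappa$ is used here, not in the way you suggest later.

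The chain construction you propose as a workaround does not avoid this and adds a genuine error. If at some stage an element $a\in M_{i+1}\setminus M_i$ satisfies $n<_\psi a$ for all $n\in M$, that fact persists in every further elementary extension; no later ``task'' can kill it, and ``omitting a type for a single candidate element'' is not a meaningful operation. Hence each successor step must \emph{already} produce an $M_{i+1}$ omitting every relevant $\Sigma_\psi$, which is the original problem over the size-$\kappa$ language $L(M_i)$, so the chain buys nothing. (Your side worry that ``not definably connected'' might fail in $M_i$ is, incidentally, unfounded: existence of a definable connection between $<_\varphi$ and $<_\psi$ is expressible by an $L(M)$-sentence, so by elementarity it transfers down from $M_i$ to $M$.) The paper's route is simply one application of the $\kappa$-Omitting Types Theorem to $T'$, after verifying the strengthened non-isolation hypothesis as above.
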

\begin{proof}
  Let $c$ be a new constant and let $T^\prime=\Th(M,m)_{m\in M}\cup
  \{m<_\varphi c\mid m\in M\}$. By Lemma~\ref{L:Main}, $T^\prime$ does
  not isolate any of the types $\Sigma_\psi(y)= \{n<_\psi y \mid n\in
  M\}$. Each $\Sigma_\psi(y)$ consists of a $<_\psi$-ordered chain of
  formulas increasing in strength. So by regularity of $\kappa$, for
  any $<_\psi$ of cofinality $\kappa$ the type $\Sigma_\psi(y)$ cannot
  be isolated neither by means of a set of $<\kappa$ formulas. By the
  $\kappa$-Omitting Types Theorem (see Theorem 2.2.19
  in~\cite{Cha-Kei90}), there is a model of $T^\prime$ omitting all
  types $\Sigma_\psi(y)$ for any $<_\psi$ of cofinality $\kappa$. This
  gives the elementary extension $N$.
\end{proof}

This corollary applies in particular to the case $\kappa=\omega$. Here
the assumption on the cofinality of $<_\psi$ is not needed since it is
the only possible cofinality in a countable model, and the Omitting
Types Theorem used in the proof is the ordinary one for countable
languages and countably many non-isolated types.

\section{Completeness}

For a language $L$ let $L(\qcf)$ be the set of formulas which are
built like first-order formulas but using an additional two-place
quantifier $\qcf xy\;\phi$, for different variables $x$ and $y$. Let
$C$ be class a of regular cardinals and $M$ an $L$-structure. For a
binary relation $R$ on $M$, we write ``$\cf R \in C$ '' for ``$R$ is a
linear ordering of $M$, without last element and cofinality in $C$ ''.

The satisfaction relation $\models_C$ for $L$-structures $M$,
$L(\qcf)$-formulas $\psi(\bar z)$, and tuples $\bar c$ of elements of
$M$ is defined inductively, where the $\qcf$-step is
\[
  M\models_C\qcf xy\;\phi(x,y,\bar
  c)\;\Leftrightarrow\;\cf\,\{(a,b)\mid M\models_C\phi(a,b,\bar
  c)\}\in C.\]

\noindent We say that $M$ is a \emph{$C$-model of $T$}, a set of
$L(\qcf)$-sentences, if $M\models_C\psi$ for all $\psi\in T$.

A \emph{weak} structure $M^\ast=(M,\ldots)$ is an $L^\ast$-structure,
where $L^*$ is an extension of $L$ by an $n$-ary relation $R_\phi$ for
every $L(\qcf)$-formula $\phi(x,y,z_1,\dotsc,z_n)$. Satisfaction is
defined using the rule

\[M^\ast\models\qcf xy\;\phi(x,y,\bar c)\;\Leftrightarrow
\;M^\ast\models R_\phi(\bar c).\] In weak structures every
$L(\qcf)$-formula is equivalent to a first-order $L^\ast$-formula, and
conversely. So the $L(\qcf)$-model theory of weak structures is the
same as their first-order model theory.

Note that  the $C$-semantics of $M$ is given by the semantics of
the weak structure $M^\ast$\/ if one sets \[M^\ast\models R_\phi(\bar
c)\;\Leftrightarrow\;M\models_C\qcf xy\;\phi(x,y,\bar c).\]

The following lemma is clear:
\begin{lemma}\label{L:weak=C}
  The $C$-semantics of $M$ is given by the weak structure $M^\ast$ if and
  only if
  \[M^\ast\models\qcf xy\;\phi(x,y,\bar
  c)\;\Leftrightarrow\;\cf\,\{(a,b)\mid M^\ast\models\phi(a,b,\bar
  c)\}\in C\] for all $\phi$ and $\bar c$.
\end{lemma}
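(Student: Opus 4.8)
The plan is to prove Lemma~\ref{L:weak=C} by unwinding the two definitions that have just been set up and observing that they agree exactly when the stated biconditional holds. Recall that the $C$-semantics $\models_C$ is defined by induction on $L(\qcf)$-formulas, and that in a weak structure $M^\ast$ satisfaction of $\qcf xy\;\phi$ is declared to be $M^\ast\models R_\phi(\bar c)$; all other clauses (atomic formulas, Boolean connectives, ordinary quantifiers) are identical in the two setups because $M^\ast$ expands $M$ and $\models$ on $M^\ast$ restricted to $L$-formulas is just ordinary first-order satisfaction in $M$. So ``the $C$-semantics of $M$ is given by $M^\ast$'' means precisely that for every $L(\qcf)$-formula $\chi(\bar z)$ and every tuple $\bar c$ from $M$ we have $M\models_C\chi(\bar c)\iff M^\ast\models\chi(\bar c)$.

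First I would prove the direction ($\Rightarrow$): assume the $C$-semantics of $M$ is given by $M^\ast$. Applying the hypothesis to the formula $\qcf xy\;\phi(x,y,\bar z)$ gives $M\models_C\qcf xy\;\phi(x,y,\bar c)\iff M^\ast\models\qcf xy\;\phi(x,y,\bar c)$, i.e.\ $M^\ast\models R_\phi(\bar c)$. Applying it to $\phi(x,y,\bar z)$ itself gives $\{(a,b)\mid M\models_C\phi(a,b,\bar c)\}=\{(a,b)\mid M^\ast\models\phi(a,b,\bar c)\}$, so these two sets have the same cofinality. Combining these with the definition of $\models_C$ at the $\qcf$-step yields exactly $M^\ast\models\qcf xy\;\phi(x,y,\bar c)\iff\cf\,\{(a,b)\mid M^\ast\models\phi(a,b,\bar c)\}\in C$.

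For the converse ($\Leftarrow$), assume the displayed biconditional holds for all $\phi,\bar c$; I would show $M\models_C\chi(\bar c)\iff M^\ast\models\chi(\bar c)$ for every $L(\qcf)$-formula $\chi$ by induction on the structure of $\chi$. The atomic case is immediate since $M^\ast$ expands $M$; the Boolean and first-order-quantifier cases are routine and use only the induction hypothesis. In the case $\chi=\qcf xy\;\phi(x,y,\bar z)$, the induction hypothesis applied to $\phi$ gives $\{(a,b)\mid M\models_C\phi(a,b,\bar c)\}=\{(a,b)\mid M^\ast\models\phi(a,b,\bar c)\}$; then the definition of $\models_C$, the equality of these sets, and the hypothesis (used to replace the cofinality condition by $M^\ast\models\qcf xy\;\phi$) chain together to give $M\models_C\chi(\bar c)\iff M^\ast\models\chi(\bar c)$.

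There is no real obstacle here — the lemma is, as the authors say, clear — and the only thing to be careful about is bookkeeping: making sure the induction in the $(\Leftarrow)$ direction is stated for all $L(\qcf)$-formulas simultaneously (so that the $\qcf$-clause can invoke the hypothesis on subformulas), and noting in the $(\Rightarrow)$ direction that one must instantiate the assumed semantic agreement at \emph{two} formulas, namely $\phi$ and $\qcf xy\;\phi$, not just one. I would keep the write-up to a few lines, since everything reduces to comparing the single $\qcf$-clause in the two definitions.
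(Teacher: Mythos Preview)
Your proposal is correct and is exactly the routine unwinding the authors have in mind; the paper gives no proof at all, simply stating that the lemma ``is clear.'' Your induction on $L(\qcf)$-formulas, with the only nontrivial step at the $\qcf$-clause, is the natural way to make this precise.
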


The following property of weak structures $M^\ast$ can be expressed by
a set $\sa(L)$ of $L(\qcf)$ sentences (the Shelah Axioms):\\

\parbox{30em}{ \emph{If the $L(\qcf)(M)$-formula $\phi(x,y)$ satisfies
    $M^\ast\models\qcf xy\;\phi(x,y)$ then $\phi$ defines a linear
    ordering $<_\phi$ without last element. Furthermore, if
    $\psi(x,y)$ defines a linear ordering $<_\psi$ and
    $M^\ast\models\neg\qcf xy\;\psi(x,y)$, there is no definable
    connection between $(M,<_\phi)$ and $(M,<_\psi)$.}}

\begin{lemma}\label{L:qc_sa}
  $L$-structures with the $C$-semantics are models of $\sa(L)$.
\end{lemma}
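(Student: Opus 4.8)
The plan is to verify that an $L$-structure $M$ equipped with the $C$-semantics, viewed as the weak structure $M^\ast$ obtained by setting $M^\ast\models R_\phi(\bar c)\Leftrightarrow M\models_C\qcf xy\,\phi(x,y,\bar c)$, satisfies each sentence in $\sa(L)$. By the remark following Lemma~\ref{L:weak=C}, this $M^\ast$ does give back the $C$-semantics of $M$, so for every $\phi$ and $\bar c$ we have $M^\ast\models\qcf xy\,\phi(x,y,\bar c)$ if and only if $\cf\{(a,b)\mid M\models_C\phi(a,b,\bar c)\}\in C$. Fix an $L(\qcf)(M)$-formula $\phi(x,y)$ (absorbing parameters) with $M^\ast\models\qcf xy\,\phi(x,y)$; then the relation $R=\{(a,b)\mid M\models_C\phi(a,b)\}$ has $\cf R\in C$. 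Since every member of $C$ is a regular cardinal and, by our convention, ``$\cf R\in C$'' already abbreviates ``$R$ is a linear ordering of $M$ without last element whose cofinality lies in $C$'', the first assertion of $\sa(L)$ — that $\phi$ defines a linear ordering $<_\phi$ without last element — is immediate from unwinding this abbreviation.

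For the second assertion, suppose in addition that $\psi(x,y)$ defines a linear ordering $<_\psi$ of $M$ and that $M^\ast\models\neg\qcf xy\,\psi(x,y)$. The latter means $\cf(M,<_\psi)\notin C$. We must show there is no definable connection between $(M,<_\phi)$ and $(M,<_\psi)$. Now $<_\phi$ has cofinality in $C$ and $<_\psi$ has cofinality not in $C$; in particular $\cf(M,<_\phi)\neq\cf(M,<_\psi)$, since the former lies in $C$ and the latter does not. (Here we also use that $<_\psi$ is a linear ordering without last element: if $<_\psi$ had a last element it could not be connected to anything by the observation right after the definition of connection, so that case is trivial; otherwise $\cf(M,<_\psi)$ is a genuine infinite regular cardinal outside $C$, or more precisely simply differs from the value $\cf(M,<_\phi)\in C$.) By Lemma~\ref{L:gleiche_cf}, two linear orders without last element are connected if and only if they have the same cofinality; hence $(M,<_\phi)$ and $(M,<_\psi)$ are not connected, and a fortiori there is no \emph{definable} connection between them. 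This establishes both clauses of $\sa(L)$, completing the proof.

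The only subtlety — and the step I would be most careful about — is the case analysis around when $<_\psi$ actually names a linear ordering without last element. The sentence $\neg\qcf xy\,\psi$ carries no information guaranteeing that $\psi$ is well-behaved, so one has to be sure that the clause ``there is no definable connection between $(M,<_\phi)$ and $(M,<_\psi)$'' in $\sa(L)$ is only asserted under the hypothesis that $\psi$ does define a linear ordering (which is exactly how the Shelah Axioms are phrased above). Granting that, the argument reduces cleanly to Lemma~\ref{L:gleiche_cf} via the disparity of cofinalities forced by $C$ versus its complement, together with the trivial no-connection-with-a-last-element case. I do not expect any genuine obstacle here: once the abbreviation ``$\cf R\in C$'' is unpacked, everything follows from the earlier lemmas.
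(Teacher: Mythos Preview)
Your argument is correct and is exactly the approach the paper takes: the paper's proof is the single line ``This follows from Lemma~\ref{L:gleiche_cf},'' and you have simply unpacked that reference by checking both clauses of $\sa(L)$ via the cofinality disparity and the last-element triviality. Nothing is missing.
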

\begin{proof}
  This follows from Lemma \ref{L:gleiche_cf}.
\end{proof}

\begin{theorem}\label{T}
  Let $C$ be a non-empty class of regular cardinals, different from
  the class of all regular cardinals. An $L(\qcf)$-theory $T$ has a
  $C$-model if and only if $T\cup\sa(L)$ has a weak model.
\end{theorem}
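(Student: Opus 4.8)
The forward direction is immediate. If $M$ is a $C$-model of $T$, turn $M$ into a weak structure $M^\ast$ by declaring $M^\ast\models R_\phi(\bar c)$ exactly when $M\models_C\qcf xy\;\phi(x,y,\bar c)$. By the observation preceding Lemma~\ref{L:weak=C} the $L(\qcf)$-satisfaction of $M^\ast$ agrees with the $C$-semantics of $M$, so $M^\ast$ is a weak model of $T$; and $M^\ast\models\sa(L)$ by Lemma~\ref{L:qc_sa}.

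For the converse, let $M^\ast$ be a weak model of $T\cup\sa(L)$. Since in weak structures $L(\qcf)$-formulas and first-order $L^\ast$-formulas are interchangeable, every elementary extension of $M^\ast$ is again a weak model of $T\cup\sa(L)$. Fix $\kappa\in C$ and a regular $\mu\notin C$ (available by the two hypotheses on $C$), and, using Löwenheim--Skolem, assume $M^\ast$ is small enough that Corollary~\ref{C:Omitting2} can be applied to it. The plan is to build an elementary chain $(M_\alpha)_{\alpha<\lambda}$ of weak structures with $M_0=M^\ast$ whose union $N^\ast$ is \emph{correct}, i.e.\ satisfies, for every $L(\qcf)(N^\ast)$-formula $\phi(x,y)$,
\[N^\ast\models\qcf xy\;\phi(x,y)\iff\cf\,\{(a,b)\mid N^\ast\models\phi(a,b)\}\in C.\]
Once this is achieved, Lemma~\ref{L:weak=C} tells us that $N^\ast$ is the weak structure giving the $C$-semantics of its $L$-reduct $N$; and since $N^\ast$ is an elementary extension of $M^\ast$ it is still a weak model of $T$, hence $N\models_C T$.

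To arrange correctness one must control, stage by stage, each pair $(\phi,\bar c)$. If $M_\alpha\models R_\phi(\bar c)$ then $\phi(\cdot,\cdot,\bar c)$ already defines a linear order $<_\phi$ without last element (by $\sa(L)$), and we must drive its cofinality to $\kappa$; if $\phi(\cdot,\cdot,\bar c)$ is a linear order without last element but $M_\alpha\models\neg R_\phi(\bar c)$, we must drive its cofinality to $\mu$. (Every other pair is automatically correct, because ``being a linear order without last element'' is absolute between $M_\alpha$ and $N^\ast$.) At a successor stage we pass from $M_\alpha$ to $M_{\alpha+1}$ by one or more applications of Corollary~\ref{C:Omitting2}: to raise the cofinality of an order we \emph{escape} it, i.e.\ arrange that $M_\alpha$ is not cofinal in $M_{\alpha+1}$ with respect to it, and doing this at the appropriate cofinal family of stages makes the cofinality in $N^\ast$ come out exactly $\kappa$ (for active orders) or exactly $\mu$ (for inactive ones); to prevent an order that has already reached cofinality $\kappa$ (resp.\ $\mu$) from being disturbed, we apply Corollary~\ref{C:Omitting2} with its parameter taken to be $\kappa$ (resp.\ $\mu$), so that clause~(2) keeps $M_\alpha$ cofinal in $M_{\alpha+1}$ with respect to it. What makes the two requirements compatible is $\sa(L)$: an active order and an inactive one are never definably connected, so by clause~(2) of Corollary~\ref{C:Omitting2} escaping an active order never damages a settled inactive one, and conversely. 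Lemma~\ref{L:gleiche_cf} is used to treat orders that are connected to one another, which by that lemma must acquire the same cofinality.

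I expect the main obstacle to be the bookkeeping together with the attendant cardinal arithmetic: one must choose $\lambda$, $\kappa$, $\mu$ and bound the growth of the $M_\alpha$ so that simultaneously (i) every active order is escaped at a cofinal set of stages of order type $\kappa$ and then left alone, so its cofinality is exactly $\kappa\in C$; (ii) every inactive linear order ends with cofinality exactly $\mu\notin C$ (whether by escaping it cofinally often or by keeping it bounded from a sufficiently small $M_0$); and (iii) the cardinality hypothesis of Corollary~\ref{C:Omitting2} (both $|M_\alpha|$ and $|L|$ below the relevant cardinal) holds at every step. Once this scaffolding is in place, the individual steps are routine applications of Corollary~\ref{C:Omitting2} and the final verification is a direct cofinality computation in a union of a chain.
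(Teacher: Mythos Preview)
Your forward direction and the overall chain architecture are right, but the bookkeeping you defer to the end hides a genuine missing idea. Corollary~\ref{C:Omitting2} with parameter $\kappa$ protects only orders that are (a) not definably connected to the escaped one \emph{and} (b) currently of cofinality exactly $\kappa$. The axioms $\sa(L)$ secure (a) across the active/inactive boundary, but say nothing about (b). If you escape an active order to drive it toward cofinality $\kappa$, inactive orders sitting at cofinality $\mu\neq\kappa$ are not covered by clause~(2) and may be escaped as well; done cofinally often in a chain of length $\kappa$, this forces their final cofinality to $\kappa\in C$, which is fatal. The alternative---never escaping active orders and keeping them at cofinality $\kappa$ throughout---requires every active order, including one whose parameters first appear at some late stage $M_\alpha$, to already have cofinality $\kappa$ in $M_\alpha$, and $\sa(L)$ does not give you that either. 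So neither branch of your plan goes through with $\sa(L)$ alone.

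The paper resolves this by an expansion trick you do not mention: enlarge $L$ to $L'$ by relation symbols $V_\phi$ and add axioms $\sk$ saying that $V_\phi(x,y,\bar c_1,\bar c_2)$ is a connection whenever $\phi(\cdot,\cdot,\bar c_1)$ and $\phi(\cdot,\cdot,\bar c_2)$ define orders agreeing on the value of $\qcf$. One first shows---via the countable-language case and ordinary first-order compactness of weak structures---that $T\cup\sa(L')\cup\sk$ has a weak model, and arranges a starting model $M_0^\ast$ of size $\kappa$ in which every definable order without last element already has cofinality $\kappa$. Then one builds a chain of length $\lambda<\kappa$ escaping \emph{only} inactive orders. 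Active orders are never escaped: a new one with parameters in $M_{\alpha+1}$ is, by $\sk$, definably connected to an $M_\alpha$-definable active order and hence has cofinality $\kappa$ there, so clause~(2) of Corollary~\ref{C:Omitting2} (with parameter $\kappa$) protects it at every subsequent step. This asymmetry---freeze the active side via Skolemised connections, move the inactive side with the chain---is precisely the device your sketch is missing.
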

\begin{proof}
  One direction follows from Lemma \ref{L:qc_sa}. For the other
  direction assume that $T\cup\sa(L)$ has a weak model.\\

  \noindent Claim 1: If $L$ is countable, $T$ has a an
  $\{\omega\}$-model of cardinality $\omega_1$.\\

  \noindent Proof. Let $M_0^\ast$ be a countable weak model of
  $T\cup\sa(L)$. Consider a linear ordering $<_\phi$ without last
  element and $M_0^\ast\models\neg\,\qcf xy\,\phi$. Then by Corollary
  \ref{C:Omitting2} for $\kappa=\omega$ and the axioms $\sa(L)$, there
  is an elementary extension $M_1^\ast$ such that $M_0$ is not
  $<_\phi$-cofinal in $M_1$, but $<_\psi$-cofinal is in $M_1$ for
  every $\psi$ with $M_0^\ast\models\qcf xy\,\psi$. We may assume that
  $M^\ast_1$ is countable. Continuing in this manner, taking unions at
  limit stages, one constructs an elementary chain of countable weak
  models $M^\ast_0\prec M^\ast_1\dotsb$ of length $\omega_1$ with
  union $M^\ast$, such that
  \begin{enumerate}
  \item If $<_\phi$ is a linear ordering of $M^\ast$ without last
    element and $M^\ast\models\neg\,\qcf xy\,\phi$, and if the
    parameters of $\phi$ are in $M_\alpha$, then for uncountably many
    $\beta\geq\alpha$, $M_\beta$ is not $<_\phi$-cofinal in
    $M_{\beta+1}$.
  \item If $M^\ast\models\qcf xy\,\psi$, and the parameters of $\psi$
    are in $M_\alpha$, then $M_\alpha$ is $<_\psi$-cofinal in $M$.
  \end{enumerate}
  It follows that, if $M^\ast\models\neg\,\qcf xy\,\phi$, then either
  $\phi$ does not define a linear ordering without last element, or
  $<_\phi$ has cofinality $\omega_1$. And, if $M^\ast\models\,\qcf
  xy\,\psi$, then $<_\psi$ has cofinality $\omega$. By Lemma
  \ref{L:weak=C} $M$ is an $\{\omega\}$-model of the $L(\qcf)$-theory
  of $M^\ast$, and whence an $\{\omega\}$-model of $T$. This proves
  Claim 1.\\

  Let $L'$ be the extension of $L$ which has for every
  $L(\qcf)$-formula $\phi(x,y,\bar z)$ a new relation symbol $V_\phi$
  of arity $2+2\cdot|\bar z|$. Let $\sk$ be the set of axioms which
  state that if $\phi(x,y,\bar c_1)$ and $\phi(x,y,\bar c_2)$ define
  linear orderings without last elements, and \[\qcf xy\,\phi(x,y,\bar
  c_1)\;\leftrightarrow\; \qcf xy\,\phi(x,y,\bar c_2),\] then
  $V_\phi(x,y,\bar c_1,\bar c_2)$ defines a connection between the two
  orderings.\\

  \noindent Claim 2: $T\cup\sa(L')\cup\sk$ has a weak model.\\

  \noindent Proof: By compactness we may assume that $L$ is countable.
  Then $T$ has an $\{\omega\}$-model $M$ of cardinality $\omega_1$, by
  Claim 1. If the $L(\qcf)$-formula $\phi(x,y,\bar c_1)$ and
  $\phi(x,y,\bar c_2)$ define linear orderings without last element,
  and $M\models_C\qcf xy\,\phi(x,y,\bar c_1)\,\leftrightarrow\, \qcf
  xy\,\phi(x,y,\bar c_2)$, then the two orderings have the same
  cofinality, namely $\omega$ or $\omega_1$, and there is a connection
  between them by Lemma \ref{L:gleiche_cf}. We interpret
  $V_\phi(x,y,\bar c_1,\bar c_2)$ by any such connection. This yields
  an expansion of $M$, which is an $\{\omega\}$-model of
  $T\cup\sa(L')\cup\sk$. This proves Claim 2.\\

  To prove the theorem, we choose two regular cardinals
  $\lambda<\kappa$ such that $|L|\leq\kappa$ and either
  $\lambda\not\in C$ and $\kappa\in C$ or conversely. Let $M_0^\ast$
  be a weak model of $T\cup\sa(L')\cup\sk$. It $M_0^\ast$ is finite,
  it is a $C$-model of $T$ for trivial reasons\footnote{$\sa(L)$ is
    used here.}. Otherwise we may assume that $M_0^\ast$ has
  cardinality $\kappa$ and all $L(\qcf)$-definable linear orderings
  without last element have cofinality $\kappa$. Let us first assume
  that $\lambda\not\in C$ and $\kappa\in C$.

  Consider an $L(\qcf)$-definable linear ordering $<_\phi$ without
  last element and $M_0^\ast\models\neg\,\qcf xy\,\phi$. Then by
  Corollary \ref{C:Omitting2} and the axioms $\sa(L')$, there is an
  elementary extension $M_1^\ast$ such that $M_0$ is not
  $<_\phi$-cofinal in $M_1$, but $<_\psi$-cofinal in $M_1$ for every
  $L(\qcf)$-definable ordering with $M_0^\ast\models\qcf xy\,\psi$. So
  $(M_1,<_\psi)$ has still cofinality $\kappa$. We may assume that
  $M^\ast_1$ has cardinality $\kappa$. Then by the axioms $\sk$, every
  $L(\qcf)$-definable ordering $<_\psi$ of $M_1$ with
  $M_1^\ast\models\qcf xy\,\psi$ is connected to an $M_0$-definable
  ordering $<_{\psi_0}$, and so has cofinality $\kappa$.

  Continuing in this manner, taking unions at limit stages, one
  constructs an elementary chain of weak models $M^\ast_0\prec
  M^\ast_1\dotsb$ of length $\lambda$ with union $M^\ast$, such that
  \begin{enumerate}
  \item If $<_\phi$ is an $L(\qcf)$-definable linear ordering $<_\phi$
    of $M^\ast$ without last element and $M^\ast\models\neg\,\qcf
    xy\,\phi$, and if the parameters of $\phi$ are in $M_\alpha$, then
    for $\lambda$-many $\beta\geq\alpha$, $M_\beta$ is not
    $<_\phi$-cofinal in $M_{\beta+1}$.
  \item If $M^\ast\models\qcf xy\,\psi$, and the parameters of $\psi$
    are in $M_\alpha$, then $M_\alpha$ is $<_\psi$-cofinal in $M$.
  \end{enumerate}
  It follows that, if $M^\ast\models\neg\,\qcf xy\,\phi$, then either
  $\phi$ does not define a linear ordering without last element, or
  $<_\phi$ has cofinality $\lambda$. And, if $M^\ast\models\,\qcf
  xy\,\psi$, then $<_\psi$ has cofinality $\kappa$. By Lemma
  \ref{L:weak=C} $M\restriction L$ is an $C$-model of the
  $L(\qcf)$-theory of $M^\ast$, and whence a $C$-model of $T$.

  The proof in the case $\lambda\in C$ and $\kappa\not\in C$ is,
  mutatis mutandis, the same.
\end{proof}

\begin{corollary}
  For every class $C$ of regular cardinals, the logic $L(\qcf_C)$ is
  compact.
\end{corollary}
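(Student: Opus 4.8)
The plan is to reduce compactness of $L(\qcf_C)$ to first-order compactness via Theorem \ref{T}. First I would dispose of the two degenerate cases that Theorem \ref{T} excludes. If $C=\emptyset$, then $M\models_C\qcf xy\,\phi$ fails for every $M$ and $\phi$; if $C$ is the class of all regular cardinals, then $M\models_C\qcf xy\,\phi$ holds exactly when $\phi$ defines a linear ordering of the universe of $M$ without last element, since every linear order without last element has regular cofinality. In both cases $\qcf xy\,\phi(x,y,\bar z)$ is, uniformly in $\bar z$, provably equivalent to a first-order $L$-formula (to $x\neq x$ in the first case; to the conjunction of the first-order statements expressing that $\phi$ defines a linear ordering of the universe without last element, in the second). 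Replacing $\qcf$-subformulas by these first-order equivalents translates $L(\qcf_C)$ into plain first-order logic, whose compactness then settles these cases.

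Now suppose $C$ is non-empty and not the class of all regular cardinals, and let $T$ be an $L(\qcf)$-theory such that every finite subset of $T$ has a $C$-model. I would then show that every finite subset of $T\cup\sa(L)$ has a weak model. Such a finite set is contained in $T_0\cup\sa(L)$ for some finite $T_0\subseteq T$. By hypothesis $T_0$ has a $C$-model $M$; let $M^\ast$ be the weak structure attached to $M$, i.e.\ the one with $M^\ast\models R_\phi(\bar c)\Leftrightarrow M\models_C\qcf xy\,\phi(x,y,\bar c)$, so that $M^\ast$ and $M$ (with $C$-semantics) satisfy the same $L(\qcf)$-sentences. Then $M^\ast\models T_0$, and by Lemma \ref{L:qc_sa} also $M^\ast\models\sa(L)$; hence $M^\ast$ is a weak model of $T_0\cup\sa(L)$, a fortiori of the given finite set.

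Since in a weak structure every $L(\qcf)$-formula is equivalent to a first-order $L^\ast$-formula and conversely, ``having a weak model'' is exactly satisfiability of a first-order $L^\ast$-theory, so it is governed by the ordinary Compactness Theorem. From the previous paragraph it follows that $T\cup\sa(L)$ itself has a weak model, and Theorem \ref{T} then yields a $C$-model of $T$. This is precisely the assertion that $L(\qcf_C)$ is compact.

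I do not expect a real obstacle here: the substantive content is already contained in Theorem \ref{T}, and the corollary is essentially bookkeeping. The only points needing a little care are the two excluded cases, where one must observe that $\qcf_C$ collapses to a first-order notion, and the remark that the assignment $M\mapsto M^\ast$ turns a finite fragment of $T$ together with all of $\sa(L)$ into a set with a weak model, which is what lets first-order compactness be applied on the weak-structure side.
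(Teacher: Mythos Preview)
Your proposal is correct and is exactly the intended deduction: the paper states the corollary without proof, as an immediate consequence of Theorem~\ref{T}, and you have simply written out that reduction---finite $C$-satisfiability of $T$ gives (via Lemma~\ref{L:qc_sa}) finite weak satisfiability of $T\cup\sa(L)$, first-order compactness then gives a weak model, and Theorem~\ref{T} returns a $C$-model. Your handling of the two degenerate cases $C=\emptyset$ and $C=\{\text{all regular cardinals}\}$, which Theorem~\ref{T} explicitly excludes, is also correct and necessary; the paper leaves this to the reader.
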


We  have always assumed that whenever $\qcf xy\varphi(x,y,\bar c)$,
the definable ordering $<_\varphi$ linearly orders the universe.
This is not exactly the assumption of Shelah in~\cite{Sh43}:
with his definition   $<_\varphi$ linearly orders
$\{x\mid  \exists  y \,\varphi(x,y,\bar c)\}$, the domain of $\varphi$.
The results presented here, in particular completeness and
compactness, also apply to this modification of the semantics,
it suffices to add, for each such $\varphi$,  new relation symbols
$R_\varphi$ and   $H_\varphi$, and declare that for every $\bar c$,
$R_\varphi(x,y,\bar c)$ defines a linear ordering $<^\prime_\varphi$
on the universe and  $H_\varphi(x,y,\bar c)$  connects  $<_\varphi$
and  $<^\prime_\varphi$. This gives compactness. For the
formulation of completeness (Theorem~\ref{T}) one must
adapt the axioms $\mathrm{SA}$ to the new situation.

\nocite{Sh43}
\nocite{MakowskiShelah81}
\nocite{Kei70}
\nocite{Van11}
\nocite{Sh:18}
\nocite{CaSh:1116}
\nocite{Ebb85}
\nocite{TentZiegler10}
\nocite{Cha-Kei90}

\bibliographystyle{acm}





\noindent{\sc
Departament de Matem\`atiques i Inform\`atica\\
Universitat de Barcelona}\\
{\tt e.casanovas@ub.edu}\\

\noindent{\sc
Mathematisches Institut \\
Universit\"at Freiburg\\
{\tt ziegler@uni-freiburg.de}

\end{document}
